\title
{A theorem on support $\tau$-tilting pairs}
\author[{G.}  D'Este]{{Gabriella} {D'Este}}
\address{University of Milano\\
Department of Mathematics ``Federigo Enriques''\\
Via Cesare Saldini, 50 \\
20133 Milano MI\\
(Italy)}
\email{gabriella.deste@unimi.it}
\thanks{Dedicated to Professor Claus Michael Ringel on his 80th birthday.}
\keywords{$\tau$-rigid and $\tau$-tilting modules. $\tau$-rigid and support $\tau$-tilting pairs.}
\subjclass[2020]{Primary 16G20, Secondary 16D10}
\begin{document}
\begin{abstract}
  We show that there is a special bijection between the indecomposable summands of the two modules which form a basic support $\tau$-tilting pair and the indecomposable summands of the two modules which form another basic support $\tau$-tilting pair. 
\end{abstract}

\maketitle

\section{Introduction and definitions}

Given two ``basic'' pairs of modules $\tonde{T, P}$ and $\tonde{T^\prime, P^{\prime}}$, where $T$ and $T^{\prime}$ do not admit non zero morphisms to $\tau T$ and $\tau T^{\prime}$ respectively while $P$ and $P^{\prime}$ are projective, we construct a bijection between the indecomposable summands of $T\oplus P$ and those of $T^{\prime} \oplus P^{\prime}$. 
If this bijection sends $X_{i}$ to $Y_{j}$, then either $X_{i}$ is isomorphic to $Y_{j}$, or $X_{i} \oplus Y_{j}$ is 
not ``$\tau$-rigid'' or one of the pairs $\left(X_{i}, Y_{j}\right)$ and $\left(Y_{j}, X_{i}\right)$ is not ``$\tau$-rigid''. 
In this way we extend a bijection between the indecomposable summands of two basic tilting modules \cite[Theorem 2.5]{4}, or between the indecomposable summands of two basic $\tau$-tilting modules \cite[Acknowledgment]{5}.

In the following we recall some definitions and we fix some conventions. First of all, $A$ will denote a basic \cite[page 35]{3} finite dimensional algebra over the field $K$, and $n$ will be the number of isomorphism classes of simple $A$-modules. Moreover,
modules will be left A-modules, and $\tau$ will denote the Auslander--Reiten translation \cite[page 225]{3}. 

Finally, if $M$ is a finitely generated module, then $\add M$ will demote the class of all finite direct sums of direct summands of $M$.
Let $T$ be a finitely generated $A$-module. Then we say that $T$ is \emph{$\tau$-rigid} if $\Hom_{A}(T, \tau T)=0$. 
Next, we say that $T$ is \emph{$\tau$-tilting} if $T$ is $\tau$-rigid and
$n$ is the number of isomorphism classes of the indecomposable direct summands of $M$. 
%
%
%
We know from \cite[Proposition 1.3]{1} or (\cite[Proposition 3.6]{8} and \cite{2}) that, if $M$ is a basic and $\tau$-rigid module, then $M$ is the direct sum of at most $n$ indecomposable summands. Now, let $P$ be a finitely generated projective module. Then we say that the pair $(T, P)$ is a \emph{$\tau$-rigid pair} if $T$ is a $\tau$-rigid module and $\Hom_{A}\left(P, T\right)=0$. Moreover, we say that $(T, P)$ is a \emph{support $\tau$-tilting pair} if $\left(T, P\right)$ is a $\tau$-rigid pair such that $n$ is the number of is isomorphism classes of the indecomposable direct summands of $T \oplus P$. We say that a support $\tau$-tilting pair $(T, P)$ is \emph{basic} \cite[Definition 4.7]{6} if both the modules $T$ and $P$, and so also $T \oplus P$, are basic modules. 
We also recall that a module $T$ is a \emph{partial tilting module} if the projective dimension of $T$ is at most one and $\Ext_{A}^{1}\tonde{T, T}=0$. Moreover, we say that a partial tilting module $T$ is a \emph{tilting module} if $n$ is the number of the isomorphism classes of the indecomposable summands of $T$. Using this terminology,
given two basic  tilting (resp. $\tau$-tilting) modules $T=\oplus_{i=1}^{n} X_{i}$ and $T^{\prime}=\oplus_{i=1}^{n} Y_{i}$, we know from 
\cite[Theorem 2.5]{4} (resp. \cite[Acknowledgment]{5}) that there is a permutation $s \in S_{n}$ such that the bijection $X_{i} \mapsto Y_{s(i)}$ has the following properties:
\begin{itemize}
  \item If $X_{i} \simeq Y_{j}$ for some $j$, then we have $s(i)=j$.
  \item If $X_{i} \neq Y_{j}$ for any $j$, then the module $X_{i} \oplus Y_{s(i)}$ is not partial tilting (resp. is not $\tau$-rigid).\\
\end{itemize}

Finally, we recall the relationship between some of the previous modules or pairs.

\begin{lem}\cite[page 167]{7}
Let $T$ be a tilting module, and let $X$ be a module such that $T \oplus X$ is a partial tilting module. Then $X \in \add T$.
\end{lem}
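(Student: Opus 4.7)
The plan is to reduce the statement to the bound on the number of indecomposable summands of a $\tau$-rigid module, which is already recalled in the introduction.

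First I would use the standard fact that any partial tilting module is $\tau$-rigid, so that the hypothesis on $T \oplus X$ upgrades from partial tilting to $\tau$-rigid; this follows from the Auslander--Reiten formula, which identifies $\Hom_A\tonde{M, \tau M}$ with (a quotient of) $D\,\Ext_A^1\tonde{M, M}$ whenever $M$ has projective dimension at most one. Applying this with $M = T \oplus X$ yields $\Hom_A\tonde{T \oplus X, \tau\tonde{T \oplus X}} = 0$.

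Next, I would pass to the basic version $M^\prime$ of $T \oplus X$, for which $\add M^\prime = \add\tonde{T \oplus X}$ and $M^\prime$ remains $\tau$-rigid. By \cite[Proposition 1.3]{1} (the result cited in the paragraph just before the definition of $\tau$-rigid pair), $M^\prime$ has at most $n$ pairwise non-isomorphic indecomposable summands. Since $T$ is tilting, its basic version already contributes exactly $n$ pairwise non-isomorphic indecomposable summands, all of which belong to $\add M^\prime$.

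Consequently, the bound from \cite[Proposition 1.3]{1} is saturated by the summands of the basic version of $T$, forcing $M^\prime$ to coincide with that basic version up to isomorphism. Hence every indecomposable summand of $X$ is isomorphic to an indecomposable summand of $T$, proving $X \in \add T$. The only delicate step is the first reduction; once partial tilting has been upgraded to $\tau$-rigid, the remainder is a clean counting argument on isomorphism classes of indecomposable direct summands.
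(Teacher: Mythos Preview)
The paper does not prove this lemma; it is simply quoted from \cite[page 167]{7}. Your argument is correct. One caution on the first reduction: in the general Auslander--Reiten formula it is $D\Ext^1_A(M,M)\cong\overline{\Hom}_A(M,\tau M)$ that arises as a \emph{quotient} of $\Hom_A(M,\tau M)$, not the reverse, so your parenthetical ``(a quotient of)'' points the wrong way; the step is nonetheless sound because you correctly invoke $\operatorname{pd} M\le 1$, under which the formula upgrades to an honest isomorphism $\Hom_A(M,\tau M)\cong D\Ext^1_A(M,M)$, and the vanishing then follows. After that, the counting via \cite[Proposition 1.3]{1} is clean. Compared with the classical argument in the cited source---which bounds the number of indecomposable summands of a partial tilting module directly (e.g.\ through Bongartz completion) without ever mentioning $\tau$---your detour through $\tau$-rigidity is a mild anachronism, but it has the advantage of using only results already stated in the introduction of the present paper.
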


\begin{lem}\cite[Theorem 2.12]{1}
Let $T$ be a $\tau$-tilting module, and let $X$ be a module such that $T \oplus X$ is a $\tau$-rigid module. Then $X \in \add T$.
\end{lem}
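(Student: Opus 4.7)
The plan is to argue by a pigeonhole-style contradiction, using the bound recalled just before the statement: every basic $\tau$-rigid module has at most $n$ pairwise non-isomorphic indecomposable summands, while a $\tau$-tilting module realises this bound by definition.

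First I would reduce to the basic situation. Whether or not $X$ lies in $\add T$ depends only on the isomorphism classes of indecomposable summands of $X$ and $T$, so I may replace each by its basic version. In particular $T$ then has exactly $n$ pairwise non-isomorphic indecomposable summands.

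Next, I would argue by contradiction. Assume $X \notin \add T$ and pick an indecomposable summand $X^{\prime}$ of $X$ that is not isomorphic to any indecomposable summand of $T$. Because $T \oplus X$ is $\tau$-rigid and $\tau$-rigidity passes to direct summands (using additivity of $\tau$, so that $\Hom_{A}(N, \tau N)$ appears as a direct summand of $\Hom_{A}(M, \tau M)$ whenever $N$ is a summand of $M$), the module $T \oplus X^{\prime}$ is again $\tau$-rigid. Its basic version then has at least $n+1$ pairwise non-isomorphic indecomposable summands, contradicting \cite[Proposition 1.3]{1}. Hence every indecomposable summand of $X$ is already a summand of $T$, and $X \in \add T$.

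The main obstacle would be the cardinality bound itself: without the fact that basic $\tau$-rigid modules have at most $n$ indecomposable summands, the counting collapses. Since the paper takes that bound as given, what remains is only a short pigeonhole step together with the elementary observation that $\tau$-rigidity is inherited by direct summands.
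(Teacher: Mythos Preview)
The paper does not supply its own proof of this lemma: it is quoted verbatim from \cite[Theorem~2.12]{1} and used as a black box, so there is no argument in the paper to compare against. Your counting argument is nonetheless correct and is essentially the standard proof of this direction of the cited result: a basic $\tau$-tilting module already has $n$ pairwise non-isomorphic indecomposable summands, so an extra summand not in $\add T$ would give a basic $\tau$-rigid module with $n+1$ summands, contradicting the bound recalled just before the lemma.
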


\begin{lem}\cite[Corollary 2.13]{1}
 Let $(T, P)$ be a support $\tau$-tilting pair, and let $X$ be a module such that $(T \oplus X, P)$ is a $\tau$-rigid pair. Then $X \in$ add $T$.
\end{lem}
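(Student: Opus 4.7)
The plan is a direct rank count, based on the fact that the total number of pairwise non-isomorphic indecomposable summands of a basic $\tau$-rigid pair is bounded by $n$.

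As a first step, I would invoke the pair version of the rank bound extending \cite[Proposition 1.3]{1}, which is quoted in the excerpt for modules: for any $\tau$-rigid pair $(M, Q)$, the number of pairwise non-isomorphic indecomposable summands of $M \oplus Q$ is at most $n$. This is the natural extension of the module-only statement, also contained in \cite{1}.

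Next, I would apply this bound to the $\tau$-rigid pair $(T \oplus X, P)$ of the hypothesis. Writing $S$ for the set of isomorphism classes of indecomposable summands of $T \oplus X \oplus P$, this yields $|S| \leq n$. On the other hand, the analogous set $S_{0}$ for $T \oplus P$ satisfies $|S_{0}| = n$, since $(T, P)$ is a support $\tau$-tilting pair. Because $S_{0} \subseteq S$ and $|S| \leq |S_{0}|$, the inclusion must be an equality. Consequently, every indecomposable summand of $X$ is isomorphic to some indecomposable summand of $T$ or of $P$. It then remains to rule out the case that some indecomposable summand $Y$ of $X$ belongs to $\add P$: in that case $Y$ would be a summand of $P$, so the composition of a split epimorphism $P \twoheadrightarrow Y$ with the split monomorphism $Y \hookrightarrow X$ would give a non-zero element of $\Hom_{A}(P, X) \subseteq \Hom_{A}(P, T \oplus X) = 0$, a contradiction. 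Hence every indecomposable summand of $X$ is isomorphic to a summand of $T$, that is, $X \in \add T$.

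The main obstacle is the first step, namely invoking the correct extension of the rank bound from modules to pairs; once that is in hand the remaining steps are essentially bookkeeping. A self-contained alternative would be a reduction to the $\tau$-tilting lemma already stated in the excerpt: choosing an idempotent $e \in A$ with $Ae$ basic-equivalent to $P$ and setting $B = A / A e A$, the hypothesis $\Hom_{A}(P, T \oplus X) = 0$ gives $e(T \oplus X) = 0$, so $T$ and $X$ are $B$-modules; the results of \cite{1} ensure that $T$ is a $\tau$-tilting $B$-module and that $\tau_{A}$-rigidity of $T \oplus X$ transports to $\tau_{B}$-rigidity, so the previous lemma applied over $B$ yields $X \in \add_{B} T = \add_{A} T$.
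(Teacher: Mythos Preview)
The paper does not give its own proof of this lemma: it is simply quoted from \cite[Corollary 2.13]{1} without argument, so there is nothing in the paper to compare your proposal against directly.

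That said, your argument is correct. The rank bound for $\tau$-rigid pairs that you invoke is exactly what the paper later cites as \cite[Remark after Definition 2.1]{8} inside the proof of Theorem~2.1, and your counting argument (force $S=S_0$, then exclude summands of $P$ via $\Hom_A(P,T\oplus X)=0$) is clean and complete. Your alternative route through $B=A/AeA$ is essentially how the original source \cite{1} establishes the result, so either of your approaches would be accepted.
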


\section{A theorem and some examples}

Keeping the definitions and the notation of the introduction, we prove the following theorem.
\begin{thm} 
Let $(T, P)=\tonde{X_{1} \oplus \dots \oplus X_{r}, X_{r+1} 
\oplus \dots \oplus X_{n}}$ and 
$\tonde{T^{\prime}, P^{\prime}}=$\\ $\tonde{Y_{1} \oplus \dots \oplus Y_{r'}, Y_{r'+1} \oplus \dots \oplus Y_{n}}$ 
be two basic support $\tau$-tilting pairs such that the modules $X_{i}$ and $Y_{i}$ are indecomposable for any $i$. Then there is a permutation $s\in S_{n}$ such that the bijection $\set{X_{1},\dots,X_{n}}\to\set{Y_{1},\dots,Y_{n}}$, sending $X_{i}$ to $Y_{s(i)}$ for any $i$, has the property that for any $i$ the two modules $X_{i}$ and $Y_{s(i)}$ satisfy one of the following conditions: 
\begin{enumerate}[(a)]
	\item $X_{i}$ is isomorphic to $Y_{s(i)}$. 
	\item $X_{i}\oplus Y_{s(i)}$ is not $\tau$-rigid. 
	\item $Y_{s(i)}\in \add P'$ but the pair $\tonde{X_{i},Y_{s(i)}}$ is not $\tau$-rigid. 
	\item $X_{i}\in\add P$ but the pair $\tonde{Y_{s(i)},X_{i}}$ is not $\tau$-rigid. 
\end{enumerate}
\end{thm}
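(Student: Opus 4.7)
The plan is to apply Hall's marriage theorem to the bipartite graph $G$ on vertex sets $\{1,\dots,n\}$ on each side, with an edge $(i,j)$ precisely when at least one of conditions (a)--(d) holds for the pair $(X_i, Y_{j})$. A permutation $s \in S_n$ as required by the statement is exactly a perfect matching of $G$, so it suffices to verify Hall's condition: $|N_G(S)| \ge |S|$ for every $S \subseteq \{1,\dots,n\}$.

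I would verify this by contradiction. Suppose some $S$ satisfies $|N_G(S)| < |S|$ and put $R = \{1,\dots,n\} \setminus N_G(S)$, so that $|S| + |R| > n$. For every pair $(i,j)$ with $i \in S$ and $j \in R$ we have a non-edge, and reading off the simultaneous failure of (a)--(d) yields four facts: $X_i \not\cong Y_j$; the module $X_i \oplus Y_j$ is $\tau$-rigid; $\Hom(Y_j, X_i) = 0$ whenever $j > r'$; and $\Hom(X_i, Y_j) = 0$ whenever $i > r$.

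Next, I would form a pair $(\hat T, \hat P)$ by placing, for $i \in S$, the summand $X_i$ in $\hat T$ if $i \le r$ and in $\hat P$ if $i > r$, and symmetrically for $j \in R$ according to whether $j \le r'$ or $j > r'$. The basicness of the two given pairs, combined with $X_i \not\cong Y_j$ for $i \in S$, $j \in R$, makes $(\hat T, \hat P)$ basic with $|S| + |R|$ indecomposable summands. Verifying that $\hat T$ is $\tau$-rigid and that $\Hom(\hat P, \hat T) = 0$ then reduces to a case analysis: inside the $X$-part one uses the $\tau$-rigidity of $\add T$ and the condition $\Hom(P, T) = 0$; inside the $Y$-part, the analogous facts for $(T', P')$; and across $S$ and $R$, the three remaining $\Hom$-vanishings coming from the failures of (b), (c), (d).

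Since any basic $\tau$-rigid pair has at most $n$ indecomposable summands---a standard consequence of Proposition 1.3 together with a Bongartz-type completion---this contradicts $|S| + |R| > n$, so Hall's condition holds and the desired permutation $s$ exists. The step I expect to require the most care is the four-way case analysis certifying $\tau$-rigidity and $\Hom$-vanishing of $(\hat T, \hat P)$: the formulation of the theorem with four conditions (a)--(d) is arranged precisely so that their simultaneous failure provides all the $\Hom$-vanishings needed to certify the auxiliary pair as $\tau$-rigid.
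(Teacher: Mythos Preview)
Your proposal is correct and follows essentially the same approach as the paper: the paper defines $F(i)$ as the neighbourhood of $i$ in your bipartite graph, verifies Hall's condition by the same contradiction (assembling a basic $\tau$-rigid pair from the $X_i$'s in the violating set and the $Y_j$'s outside its neighbourhood, split according to membership in $T,P,T',P'$, and then invoking the bound of $n$ summands), and finally appeals to a lemma equivalent to Hall's theorem. Your write-up is in fact a bit cleaner, since the paper treats the case $|S|=1$ separately and frames the general case as an induction on $|S|$, whereas you handle all sizes uniformly.
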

\begin{proof} 
For any $i=1, \ldots, n$, let $F(i)$ be the subset formed by all $j \in\{1, \ldots, n\}$ such that one of the following conditions holds:
\begin{enumerate}[(a$^{\star}$)]
\item $X_{i}$ is isomorphic to $Y_{j}$.
\item $X_{i} \oplus Y_{j}$ is not $\tau$-rigid. 
\item $Y_{j} \in \add P'$ but the pair $\left(X_{i}, Y_{j}\right)$ is not $\tau$-rigid.
\item $X_{i} \in\add P$ but the pair $\left(Y_{j}, X_{i}\right)$ is not $\tau$-rigid. 
\end{enumerate}
Suppose, by contradiction, that there is an $i$ such that $F(i)=\varnothing$. 
Then we deduce from $(a^{\star})$ that:
\begin{enumerate}[(1)]
\item $X_{i}, Y_{1}, \ldots, Y_{n}$ are pairwise non isomorphic.\\ On the other hand, we deduce from $(b^{\star})$  that:
\item The module $T^{\prime} \oplus X_{i}=Y_{1} \oplus \ldots \oplus Y_{r^{\prime}} \oplus X_{i}$ is $\tau$-rigid.\\
Moreover, we deduce from $(c^{\star})$ that:
\item $\operatorname{Hom}_{A}\left(P^{\prime}, X_{i}\right)=0$.\\
Finally, we deduce from $(d^{\star})$ that:
\item If $X_{i} \in \add P$, then $\Hom_{A}\left(X_{i}, T^{\prime} \oplus P^{\prime}\right)=0$.
\end{enumerate}
We first show that if $X_{i} \in\add P$, then $\Hom_{A}\left(X_{i}, T^{\prime} \oplus P^{\prime}\right) \neq 0$. Indeed, if \\ $\Hom_{A}\left(X_{i}, T^{\prime}\oplus P^{\prime}\right)=0$, 
then $\left(T^{\prime}, P^{\prime} \oplus X_{i}\right)$ is a $\tau$-rigid pair. 
Hence, we know from \cite[Remark after Definition 2.1]{8} that $T'\oplus P' \oplus
X_{i}$ is a direct sum of at most $n$ indecomposable modules, up to multiplicity. Since $(T^{\prime}, P^{\prime})$ is a basic support $\tau$-tilting pair, it follows
that $T'\oplus P'$ is the direct sum of $n$ pairwise non isomorphic indecomposable modules. This implies that $X_{i}\in\add(T'\oplus P')$. Hence we have $X_{i}\simeq Y_{j}$ for some $j$. This remark and $(a^\star)$ imply that $j\in F(i)$, and so $F(i)\neq \emptyset$. This is a contradiction. 
Therefore, we may assume that $X_{i} \in \add T$. Since $(T',P')$ is a support $\tau$-tilting pair, we have $\Hom_{A}(P',T')=0$. This remark and (3) imply that 
\begin{enumerate}[(5)]
\item $\Hom_{A}(P',T'\oplus X_{i})=0$. 
\end{enumerate}
Putting (2) and (5) together, we conclude that $(T'\oplus X_{i},P')$ is a $\tau$-rigid pair. Since $(T',P')$ is a support $\tau$-tilting pair, Lemma 3 implies that $X_{i}\in\add T'$. This is a contradiction to (1). This contradiction implies that 
\begin{enumerate}[(6)]
\item $F(i)\neq \emptyset$ for any $i$. 
\end{enumerate}
Next, we claim that if $m=2, \ldots, n$ and $1 \leq i_{1}<i_{2}<\ldots<i_{m} \leq n$, then we have $\left|F\left(i_{1}\right) \cup \ldots \cup F\left(i_{m}\right)\right| \geq  m$.


Since $|F(i)| \geq 1$ for any $i$, we may assume by induction that there is some integer $m$ such that $2\leq m\leq n$ and the following property holds:
\begin{enumerate}[(7)]
\item If $t=1,\dots,m-1$ and $1\leq i_i <\dots<i_t\leq n$, then $\left|F(i_1)\cup\dots\cup F(i_t)\right| \geq t$. 
\end{enumerate}
Assume now, by contradiction, that 
$\abs{F(j_1)\cup\dots\cup F(j_m)} < m$ for some $1\leq j_1<\dots<j_m\leq n$. Then we deduce from (7) that 
\begin{enumerate}[(8)]
\item $\abs{F(j_1)\cup\dots\cup F(j_m)} = m-1$. 
\end{enumerate}
Let $B$ and $C$ be the subsets of $\set{1,\dots,n}\setminus \tonde{F(j_1)\cup\dots\cup F(j_m) }$ such that $b\in B$ iff $Y_b\in\add T'$ and $c\in C$ iff $Y_c\in \add P'$. Next, let $D$ and $E$ be the subsets of $J=\set{j_1,\dots,j_m}$ such that $d\in D$ iff $X_d\in\add T$ and $e\in E$ iff $X_e\in\add P$. Now, let $U = \oplus_{b\in B} Y_b$, $V = \oplus_{c\in C} Y_c$, $W = \oplus_{d\in D} X_d$, and $Z = \oplus_{e\in E} X_e$. Since $(U,V)$ is a direct summand of $(T',P')$ and $(W,Z)$ is a direct summand of $(T,P)$, the following facts hold: 
\begin{enumerate}[(9)]
\item $(U,V)$ is a $\tau$--rigid pair and $U\oplus V$ is a basic direct sum of $n-(m-1)$ indecomposable modules. 
\end{enumerate}
\begin{enumerate}[(10)]
\item $(W,Z)$ is a $\tau$--rigid pair and $W\oplus Z$ is a basic direct sum of $m$ indecomposable modules. 
\end{enumerate}
Moreover, we deduce from $(a^\star)$ and $(b^\star)$ that 
\begin{enumerate}[label=(11)]
\item $Y_b\not \simeq X_d$ and $Y_b\oplus X_d$ is $\tau$--rigid for any $b\in B$ and $d\in D$. 
\end{enumerate}
Since $U$ and $W$ are $\tau$--rigid, this implies that 
\begin{enumerate}[(12)]
\item $U\oplus W$ is $\tau$--rigid. 
\end{enumerate}
On the other hand, we deduce from $(c^\star)$ [resp. $(d^\star)$] that $\Hom_A(Y_c,X_d)=0$ for any $c\in C$ and $d\in D$ [resp. $\Hom_A(X_e,Y_b)=0$ for any $e\in E$ and $b\in B$. 
Hence, we deduce from (9) and (10) that
\begin{enumerate}[(13)]
\item $V\oplus Z$ is projective and $\Hom_A(V\oplus Z, U\oplus W)=0$. 
\end{enumerate}
Putting (12) and (13) together, we conclude 
\begin{enumerate}[(14)]
\item $(U\oplus W,V\oplus Z)$ is a basic $\tau$--rigid pair. 
\end{enumerate}
To find the desired contradiction, we deduce form (9) and (10) that $U\oplus V\oplus W\oplus Z$ is the direct sum of $n-(m-1)+m$ indecomposable modules. 
However, we know from \cite[Remark after Definition 2.1]{8} that if $(T",P")$ is a basic $\tau$--rigid pair, then $T"\oplus P"$ has at most $n$ indecomposable summands. This contradiction implies that
\begin{enumerate}[(15)]
\item $\abs{F(j_1)\cup\dots\cup F(j_m)}\geq m$ if $1\leq j_1 <\dots<j_m\leq n$. 
\end{enumerate}
Consequently, we deduce from (7) and (15) that 
\begin{enumerate}[(16)]
\item $\abs{F(i_1)\cup\dots\cup F(i_t)}\geq t$ for any $t=1,\dots,n$ and $1\leq i_1 <\dots<i_t\leq n$. 
\end{enumerate}
Putting (6) and (15) together, we conclude that the subsets $F(1), \ldots, F(n)$ satisfy the hypotheses of \cite[Lemma 2.3]{4}. Hence, we deduce from \cite[Lemma 2.3]{4} that there is a permutation $s \in S_{n}$ such that $s(i) \in F(i)$ for any $i$.
\end{proof}
The next example shows that we cannot delete condition $(c)$ in the hypotheses of Theorem 2.1. 
\begin{ex} Keeping all the notation of Theorem 2.1 , let $(T, P)$ and $\left(T^{\prime}, P^{\prime}\right)$ be two basic support $\tau$-tilting pairs of the form $\left(T, P\right)=\left(X_{1} \oplus X_{2}, X_{3}\right)$ and $\left(T^{\prime}, P^{\prime}\right)=\left(Y_{1} \oplus Y_{2}, Y_{3}\right)$.

For any $i=1,2,3$, let $G(i)$ be the subset of $F(i)$ formed by all $j$ such that one of the following conditions holds:
\begin{enumerate}[(a)]
\item  $X_{i}$ is isomorphic to $Y_{j}$.
\item $X_{i} \oplus Y_{j}$ is not $\tau$-rigid.
\item[(d)] $X_{i} \in \add P$ but $\left(Y_{j}, X_{i}\right)$ is not a $\tau$-rigid pair.
\end{enumerate} 
Then we may have $G(1)=\{1,2,3\}, G(2)=\emptyset$ and $G(3)=\{1,2\}$.
\end{ex}

\begin{proof}[Construction] Let $A$ be the algebra given by the quiver $\raisebox{5mm}{\scalebox{0.6}{\xymatrix{ & \coluno{1}\ar[ld] \ar[dr] &\\ \coluno{2} & & \coluno{3}}}}$. Next, let $X_{1}=2, X_{2}= \coldue{1}{2}$, $X_{3}=3, Y_{1}=\coldue{1}{3}, Y_{2}=1$ and $Y_{3}=2$. Then 
the following facts hold:

\begin{enumerate}[(1)]
\item  $X_{2} \oplus Y_{2}=\coldue{1}{2} \oplus \coluno{1}$ is $\tau$-rigid, $X_{2} \in\add T$ and $Y_{2} \in \add T^{\prime}.$
\item $X_{2} \oplus Y_{1}=\coldue{1}{2} \oplus \coldue{1}{3}$ is $\tau$-rigid, $X_{2} \in \add T$ and $Y_{1} \in \add T^{\prime}$.
\item $X_{2} \oplus Y_{3}=\coldue{1}{2} \oplus \coluno{2}$ is $\tau$-rigid, $X_{2} \in \add T$, $Y_{3}=P^{\prime}$ and the pair $\left(X_{2}, Y_{3}\right)=\left(\coldue{1}{2}, \coluno{2}\right)$ is not $\tau$-rigid.
\end{enumerate} 
Consequently, we deduce from (1), (2) and (3) that $F(2)=\set{3}$ and $G(2)= \emptyset$. 
Moreover, the following facts hold:
\begin{enumerate}[(1)]
\setcounter{enumi}{3}
\item $X_{1} \oplus Y_{1}=2 \oplus \coldue{1}{3}$ and $X_{1} \oplus Y_{2}=2 \oplus 1$ are not 
$\tau$-rigid, and we have $X_{1}=2=Y_{3}$.
\item $X_{3}=3=P$ and $\tonde{Y_{1}, X_{3}} =\tonde{\coldue{1}{3}, 3}$ is not $\tau$-rigid.
\item $X_{3} \oplus Y_{2}=3 \oplus 1$ is not $\tau$-rigid.
\item $X_{3} \oplus Y_{3}=3 \oplus 2$ is a $\tau$-rigid module and the pairs $\left(X_{3}, Y_{3}\right)=(3,2)$ and $\left(Y_{3}, X_{3}\right)=(2,3)$ are $\tau$-rigid pairs.
\end{enumerate}
Hence, we deduce from (4) that $G(1)=\{1,2,3\}=F(1)$. On the other hand, putting (5), (6) and (7) together, we conclude that $G(3)=\{1,2\}=F(3)$.
\end{proof}

\begin{cor}
With all the notation of Example 2.2 the modules $T$ and $P^{\prime}$ have a common projective summand, namely $X_{1}=2=Y_{3}$, such that $Y_{1}$ and $Y_{2}$ are the summands of the form $Y_{s(1)}$ for some permutation $s$ satisfying the hypotheses of Theorem 2.1.
\end{cor}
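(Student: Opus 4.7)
The plan has two short verifications. First, I would check that $X_1 = 2$, the simple module at vertex~$2$ of the quiver given in Example~2.2, is projective: since vertex~$2$ is a sink of that quiver, the simple module at $2$ coincides with the indecomposable projective at that vertex. Because $X_1 = 2 = Y_3$ and $P' = Y_3$ by construction, this module is simultaneously a direct summand of $T = X_1 \oplus X_2$ and equal to $P'$, so it is a common projective summand of $T$ and $P'$, as claimed.

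Second, I would enumerate all permutations $s \in S_3$ that satisfy the conclusion of Theorem~2.1 for the pairs $(T,P)$ and $(T',P')$ of Example~2.2, that is, those with $s(i) \in F(i)$ for every $i \in \{1,2,3\}$. From the computations already made in Example~2.2 one reads off $F(1) = \{1,2,3\}$, $F(2) = \{3\}$, and $F(3) = \{1,2\}$. The constraint $s(2) \in F(2)$ forces $s(2) = 3$, after which the only two ways to complete $s$ to a bijection of $\{1,2,3\}$ are $(s(1), s(3)) = (1,2)$ and $(s(1), s(3)) = (2,1)$; both are automatically compatible with $s(1) \in F(1)$ and $s(3) \in F(3)$. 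The first option gives $Y_{s(1)} = Y_1$ and the second gives $Y_{s(1)} = Y_2$, so both $Y_1$ and $Y_2$ occur as $Y_{s(1)}$ for a permutation $s$ of the required type.

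Since the corollary is essentially a bookkeeping observation extracted from the worked example, there is no substantial technical obstacle: the computation of the sets $F(i)$ and the underlying $\tau$-rigidity checks have already been carried out in Example~2.2, and the remaining content is just the case analysis for $s$ together with the observation that vertex~$2$ is a sink.
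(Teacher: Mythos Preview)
Your proposal is correct and follows essentially the same approach as the paper: both arguments use the values $F(1)=\{1,2,3\}$, $F(2)=\{3\}$, $F(3)=\{1,2\}$ from Example~2.2, deduce that $s(2)=3$ is forced, and then observe that the only admissible permutations are $(23)$ and $(123)$, yielding $Y_{s(1)}\in\{Y_1,Y_2\}$. Your additional remark that vertex~$2$ is a sink (so $X_1=2$ is projective) is a small explicit check the paper leaves implicit, but otherwise the two proofs coincide.
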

\begin{proof} Since $F(2)=\{3\}$, it follows that $s(1) \neq 3$ for any permutation $s$ as in
 Theorem 2.1. Consequently, the bijection $X_{i} \longmapsto Y_{s(i)}$ does not fix $X_{1}=2=Y_{3}$,
which is the common summand of $\bigoplus_{i=1}^{3} X_{i}$ and $\bigoplus_{i=1}^{3} Y_{i}$. Since $F(1) = \set{1,2,3}$, $F(2)=\set{3}$
%
%
and $F(3)=\{1,2\}$, it follows that the permutations satisfying the hypotheses of Theorem 2.1 are $a=(123)$ and $b=(23)$. 
\end{proof}
The next example shows that we cannot delete condition $(d)$ in the hypotheses of Theorem 2.1.
\begin{ex}
Keeping all the notation of Theorem 2.1, let $(T, P)$ and $\left(T^{\prime}, P^{\prime}\right)$ be basic support $r$-tilting pairs of the from $(T, P)=\left(X_{1} \oplus X_{2}, X_{3}\right)$ and $\tonde{T', P'} =\tonde{Y_{1} \oplus Y_{2}, Y_{3}}$. For $i=1,2,3$, let $H(i)$ be the subset of $F(i)$ formed by all $j$ such that one of the following conditions holds:
\begin{enumerate}[(a)]
\item $X_{i}$ is isomorphic to $Y_{j}$.
\item $X_{i} \oplus Y_{j}$ is not $\tau$-rigid.
\item $Y_{j} \in$ add $P^{\prime}$ but the pair $\left(X_{i}, Y_{j}\right)$ is not $\tau$-rigid.
\end{enumerate}
Then we may have $H(i) \neq \emptyset$ for any $i$ but $H(1) \cup H(2) \cup H(3) \neq\{1,2,3\}$.
\end{ex}
\begin{proof}[Construction]
Let $A$ be as in Example 2.2, and let $X_{1}=\coldue{1}{3}, X_{2}=1, X_{3}=2, Y_{1}=2, Y_{2}= \coldue{1}{2}$ and $Y_{3}=3$. Then the following facts hold:
\begin{enumerate}[(1)]
\item $X_{1} \oplus Y_{2}=\coldue{1}{3} \oplus \coldue{1}{2}$ is $\tau$-rigid, $X_{1} \in \add T$ and $Y_{2} \in\add T^{\prime}$.
\item $X_{2} \oplus Y_{2}=1 \oplus \coldue{1}{2}$ is $\tau$-rigid, $X_{2} \in\add T$ and $Y_{2} \in\add T'$.
\item $X_{3} \oplus Y_{2}=2 \oplus \coldue{1}{2}$ is $\tau$-rigid, $Y_{2} \in\add T', X_{3}=P$ 
and $\left(Y_{2}, X_{3}\right)=\left(\coldue{1}{2}, 2\right)$ is not $\tau$-rigid. 
\end{enumerate}
Hence, we deduce from (1), (2) and (3) that 
$2 \notin F(1), 2 \notin F(2)$ and $2 \in F(3)\setminus H(3)$. It follows
that $2 \notin H(1) \cup H(2) \cup H(3)$. On the other hand,
the following facts hold: 
\begin{enumerate}[label=(4)]
\item[(4)] $\tonde{X_{1}, Y_{3}}=\tonde{\coldue{1}{3}, 3}$ is not $\tau$-rigid and $Y_3=P'$.
\item[(5)] $X_{2} \oplus Y_{1}=1 \oplus 2$ is not $\tau$-rigid.
\item[(6)] $X_{3}=2=Y_{1}$.
\end{enumerate}
Consequently, we have $3 \in H(1), 1 \in H(2)$ and $1 \in H(3)$, and so $H(i) \neq \emptyset$ for any $i$.
\end{proof}
Keeping all the notation of Theorem 2.1, the next example shows that the bijection $X_{i} \mapsto Y_{s(i)}$ does not necessarily send projective--injective modules to projective--injective modules.

\begin{ex} There exist support $\tau$-tilting pairs $\left(T, P\right)=\left(X_{1}, X_{2}\right),\left(T', P' \right)=\left(Y_{1}, Y_{2}\right)$ and $s \in S_{2}$ as in the hypotheses of Theorem 2.1 such that $X_{2}$ is projective-injective, but $Y_{s(2)}$ has infinite projective and injective dimension.
\end{ex}
\begin{proof}[Construction] Let $A$ be the algebra given by the quiver $1 \underset{b}{\stackrel{a}{\rightleftarrows}} 2$ with relations $a b=0$ and $b a=0$. 
Let $X_{1}=1, X_{2}=\coldue{2}{1}, Y_{1}=2$ and $Y_{2}=\coldue{1}{2}$. 
Then the following facts hold:
\begin{enumerate}
\item $\left(Y_{1}, X_{2}\right)=\left(2, \coldue{2}{1}\right)$ is not $\tau$-rigid and $X_{2}=P$.
\item $\left(X_{1}, Y_{2}\right)=\left(1, \coldue{1}{2}\right)$ is not $\tau$-rigid and $Y_{2}=P'$.
\end{enumerate}
Then we deduce from (1) that $1 \in F(2)$, and we deduce from (2) that $2 \in F(1)$.

Consequently, the permutation $S=(1 2)$ satisfies the hypotheses of Theorem 2.1. 
Hence $X_{2}= \coldue{2}{1}$ is a projective-injective module, but $Y_{s(2)}=Y_{1}=2$ has infinite projective and injective dimension.
\end{proof}

With the notation of Theorem 2.1, the next examples show that the bijection $X_{i} \mapsto Y_{s(i)}$ does not always fix an indecomposable common projective summand $X_{2}$ of $T \oplus P$ and $T' \oplus P'$ even in very special cases, namely when $Y_{s(2)}$ is projective (Example 2.6) or injective (Example 2.7).

\begin{ex}
There exist support $\tau$-tilting pairs $\left(T,P\right)=\left(X_{1}, X_{2}\right)$ and $\left(T',P'\right)=\left(Y_{1}, Y_{2}\right)$ and $s \in S_{2}$ as in the hypotheses of Theorem 2.1 with the following properties:
\begin{enumerate}[(i)] 
\item $X_{2}$ and $Y_{s(2)}=Y_{2}$ are projective modules.
\item $X_{2}=Y_{1}$.
\end{enumerate}
\end{ex}
\begin{proof}[Construction]
Let $A$ be the algebra given by the quiver 
$$\xymatrix{ \ar@(ul,dl)[]_{a} {1} \ar[r]^{b} & {2}}$$ 
with relations 
$a^{2}=0$ and $b a=0$. Then we have $\tau\tonde{\coldue{1}{1}} =2$. Let $X_{1}=\coldue{1}{1}, X_{2}=2, Y_{1}=2$ and $Y_{2}=\coldue{1}{1\ \ 2}$. Then the following facts holds:
\begin{enumerate}[(1)] 
\item $X_{1} \oplus Y_{1}=\coldue{1}{1} \oplus 2$ is not $\tau$-rigid, $Y_{2}=P'$ and 
the pair $\tonde{X_{1},Y_{2}}= \tonde{\coldue{1}{1},\coldue{1}{1\ \ 2}}$ is not $\tau$-rigid. 
\item $X_{2}=2=Y_{1}, X_{2}=P$ and the pair $\left(Y_{2}, X_{2}\right)=\tonde{\coldue{1}{1\ \ 2},2}$ is not $\tau$-rigid.
\end{enumerate}
Hence, we deduce from (1) and (2) that $F(1)=\{1,2\}=F(2)$. Let $s$ be the identity of $S_{2}$. Then the modules $X_{2}=2=Y_{1}$ and $Y_{s(2)}=Y_{2}=\coldue{1}{1\ \ 2}$ satisfy conditions $(i)$ and $(ii)$.
\end{proof}
\begin{ex}
There exist support $\tau$-tilting pairs $\left(T_{1} P\right)=\left(X_{1} \oplus X_{2}, 0\right)$ and $\left(T_{1}^{\prime} P^{\prime}\right)=\left(Y_{1}, Y_{2}\right)$ and $s \in S_{2}$ as in the hypotheses of Theorem 2.1 with the following properties: 
\begin{enumerate}[(i)]
\item $X_{2}$ is projective and $Y_{s(2)}=Y_{1}$ is injective, but not projective.\\
\item $X_{2}=Y_{2}$.
\end{enumerate} 
\end{ex}
\begin{proof}[Construction]
 Let $A$ be the algebras given by the quiver 
$$\xymatrix{ 1 \ar[r]^{a} & {2}  \ar@(ur,dr)[]^{b} }$$
with relations $b a=0$ and $b^{2}=0$.  
Let $X_{1}=\coldue{1}{2}, X_{2}=\coldue{2}{2}, Y_{1}=1$ and $Y_{2}=\coldue{2}{2}$. 
Then $\tau(1)=\coldue{2}{2}$ and the following facts hold:
\begin{enumerate}[(1)]
\item $\tonde{X_{1},Y_{2}} = \tonde{\coldue{1}{2},\coldue{2}{2}}$ is not $\tau$-rigid and $Y_{2}=P'$. 
\item $X_{1} \oplus Y_{1}=\coldue{1}{2} \oplus 1$ is $\tau$-rigid, $X_{1}\in\add T$
and $Y_{1}=T'$.
\end{enumerate}
Hence we deduce from (1) and (2) that  $F(1)=\set{2}$. It follows that $s=(1\ 2)$, and so we have $Y_{s(2)}=Y_{1}=1$ and $X_{2}=\coldue{2}{2}=Y_{2}$. Hence (i) and (ii) hold.
\end{proof}
\bibliographystyle{amsplain}
\bibliography{destesupport}
\end{document}